\theoremstyle{plain}
\newtheorem*{thm*}{Theorem}
\newtheorem{thm}{Theorem}[section]
\newtheorem{prop}[thm]{Proposition}
\newtheorem{lemma}[thm]{Lemma}
\newtheorem{maintheorem}{Theorem}
\theoremstyle{definition}
\newtheorem{remark}[thm]{Remark}
\newtheorem{claim}{Claim}
\renewcommand{\epsilon}{\varepsilon}
\newcommand{\bR}{\mathbb{R}}
\newcommand{\bN}{\mathbb{N}}
\DeclareMathOperator{\crit}{\rm Crit}
\newcommand{\R}{\mathbb{R}}
\newcommand{\Mf}{\mathfrak{M}(f)}
\newcommand{\comp}{\operatorname{Comp}}
\begin{document}

\title[Entropy of irregular points]{Entropy of irregular points\\ for some dynamical systems}

\author[K.~Gelfert]{Katrin Gelfert}
\address{Instituto de Matem\'atica Universidade Federal do Rio de Janeiro, Av. Athos da Silveira Ramos 149, Cidade Universit\'aria - Ilha do Fund\~ao, Rio de Janeiro 21945-909,  Brazil}\email{gelfert@im.ufrj.br}

\author[M. J.~Pacifico]{Maria Jos\'e Pacifico}
\address{Instituto de Matem\'atica Universidade Federal do Rio de Janeiro, Av. Athos da Silveira Ramos 149, Cidade Universit\'aria - Ilha do Fund\~ao, Rio de Janeiro 21945-909,  Brazil}\email{pacifico@im.ufrj.br}

\author[D.~Sanhueza]{Diego Sanhueza}
\address{Instituto de Matem\'atica Universidade Federal do Rio de Janeiro, Av. Athos da Silveira Ramos 149, Cidade Universit\'aria - Ilha do Fund\~ao, Rio de Janeiro 21945-909,  Brazil}\email{sanhueza.diego.a@gmail.com}

\begin{thanks}
{This research has been supported [in part] by CAPES -- Finance Code 001 and by CAPES- and CNPq-grants. MJP was partially supported by FAPERJ}
\end{thanks}

\keywords{irregular points,  
topological entropy,  specification property,
Lyapunov exponents}
\subjclass[2000]{Primary: %
37D25, 
37C45, 
28D99, 
37F10
}
\maketitle
\DeclareGraphicsExtensions{.jpg,.pdf,.mps,.png}

\begin{abstract} We derive sufficient conditions for a dynamical systems to have a set of irregular points with full topological entropy. Such conditions are verified for some nonuniformly hyperbolic systems such as positive entropy surface diffeomorphisms and rational functions on the Riemann sphere.
\end{abstract}

\section{Introduction}\label{s-introd}

Given a continuous map $f\colon X\to X$ on a compact metric space $X$ and a continuous observable $\varphi\colon X\to\R$, its Birkhoff average along the orbit 
$$
	\lim_{n\to\infty}\frac{1}{n}\sum_{j=0}^{n-1}\varphi(f^j(x))
$$
(for $x\in X$ for which this limit exists) play an important role because of their intimate relation with convergence in the weak$\ast$ topology. Recall that the set of $\varphi$-\emph{irregular points} for which the Birkhoff averages do not converge, 
\[
	\hat{X}(f,\varphi)
	:= \Big\{x\in X\colon \lim_{n\to\infty}\frac{1}{n}\sum_{j=0}^{n-1}\varphi(f^j(x))
	\text{ does not exist}\Big\},
\]	
 is universally null. Ruelle \cite{Ruelle(2001)} coined the term of ``points with historical behaviour'' because they  trace the history of the system, whereas points whose Birkhoff's sum converge only warn average behaviour. Although $\hat{X}(f,\varphi)$ is not detected by any invariant measure, it can be ``large'' from another point of view such as, for example, fractal dimension, entropy, or general topology. This type of question is typical in multifractal analysis.  Irregular points form an essential part of the \emph{multifractal decomposition} of $X$ (relative to $\varphi$),
\[
	X
	= \bigcup_{\alpha\in\mathbb R\cup\{\infty\}} X_\alpha(f,\varphi)\,\dot\cup \,\hat{X}(f,\varphi),
\]  
where $X_\alpha(f,\varphi)$ denotes the set of \emph{$\varphi$-regular points} with average $\alpha$,
\[
	X_\alpha(f,\varphi)
	:=\Big\{x\in X\colon \displaystyle\lim_{n\to\infty} \frac1{n}\displaystyle\sum_{j=0}^{n-1} 
\varphi(f^j(x))=\alpha\Big\}.
\] 
Denote also the \emph{set of irregular points} of $f$ by
\[
	\hat X(f)
	:= \bigcup\Big\{ \hat X(f,\varphi)\colon \varphi\colon X\to\bR\text{ continuous}\Big\}.
\]

Our focus here will be on entropy. Although any of the above defined sets is $f$-invariant, in general it is noncompact and we  rely on the concept of topological entropy introduced in \cite{Bowen(1973)}. We denote by  $h(f,A)$ the \emph{topological entropy} of $f$ on $A\subset X$. 
The following estimates hold true in general
\[
	 0 \le h(f,\hat X(f,\varphi)) \le h(f,\hat{X}(f)) \le h(f,X).
\]
Each inequality can be strict. Recall, for instance, the example of a minimal dynamical system (hence satisfying $\hat X(f)=\emptyset$) with positive entropy in \cite{Hahn-Katznelson(1967)}. To our best knowledge, there is no nontrivial (that is, say, topologically transitive) example for which $0<h(f,\hat{X}(f)) < h(f,X)$.

Previous approaches to analyze of the regular and the irregular part of the spectrum commonly require ``orbit-gluing properties''. For example, \cite{Pesin-Pitskel(1984),Fan-Feng(2000)} consider the case of a subshift of finite type. In \cite{Takens-Verbitskiy(2003)}, just assuming the specification property of $f$, there is stated a \emph{restricted variational principle} 
\[
	h(f,X_\alpha(f,\varphi))
	=\sup\Big\{h_\mu(f)\colon 
		\mu\ f\mbox{-invariant,}\int{\varphi}\,d\mu=\alpha\Big\}.
\]
On the other hand, ``chaotic dynamics'' quite commonly gives rise to irregular sets which are dense and have full entropy. For a full shift of two symbols $\sigma\colon\Sigma_2\to\Sigma_2$, by \cite[Lemma 6]{Pesin-Pitskel(1984)} it holds $h(\sigma,\hat{\Sigma}_2(\sigma))=h(\sigma,\Sigma_2)$. If $f$ satisfies the specification property, it is a consequence of \cite[Theorem 4]{Sigmund(1974)} (see also \cite[Theorem 3]{Dateyama(1981)}) that $\hat{X}(f)$ is nonempty and of \cite{Ercai-Kupper-Lin(2005)} that it has full entropy.

The specification property roughly says that given any number of arbitrarily long orbit segments, there exists an orbit which stays $\varepsilon$-close to each segment and between segments there are only a bounded number of iterations whose number only depends on $\varepsilon$ (we refer to \cite{Sigmund(1974),Thompson(2012)} for the full definition). For example, for a basic set of an Axiom A diffeomorphism, the existence of a Markov partition enables a symbolic description of  orbits and shadowing permits to ``connect'' orbit segments which were symbolically coded previously. As such partitions can be chosen with arbitrarily small diameter, this guarantees the existence of ``arbitrarily specified orbits''.  

In general, specification, or any of it's weaker versions, does not hold. We will illustrate that it is also often not required in this strong sense to deduce ``maximal historic behavior''.  Here the key observation is that such strong hypotheses are not required globally, but only for  invariant subsystems whose entropies are sufficiently large and that entropy in some sense is ``a local quantifier''.

We start by considering the set of irregular points and not \emph{a priori} fixing any observable, assuming  hyperbolicity.
 
\begin{maintheorem}\label{t-AxiomA} 
For any Axiom A  $C^{1}$-diffeomorphism $f\colon X\to X$ on a $n$-dimensional closed manifold $X$, $n\geq2$, it holds
$$
	h(f,\hat{X}(f))
	= h(f,X).
$$
\end{maintheorem}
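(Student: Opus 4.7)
The plan is to localize the entropy inside a single mixing hyperbolic basic set and then exploit the specification property there. By Smale's spectral decomposition, the non-wandering set is a finite disjoint union $\Omega(f) = \Omega_1\sqcup\cdots\sqcup \Omega_m$ of compact, $f$-invariant, transitive hyperbolic basic sets; since every $f$-invariant probability measure is supported on $\Omega(f)$, the variational principle gives
\[
h(f,X) = h(f,\Omega(f)) = \max_{1\le i\le m} h(f,\Omega_i).
\]
I assume $h(f,X)>0$ (otherwise the statement is trivial) and relabel so the maximum is attained at $\Omega_1$. By Bowen's refinement of the spectral decomposition, there is an integer $p\ge 1$ and a partition $\Omega_1 = \Omega_{1,0}\sqcup\cdots\sqcup \Omega_{1,p-1}$ into pairwise disjoint compact sets cyclically permuted by $f$, such that $g:=f^p|_{\Omega_{1,0}}$ is topologically mixing. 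Bowen's classical theorem then provides the specification property for $g$, and $h(g,\Omega_{1,0}) = p\cdot h(f,\Omega_1)$.

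Since $g$ has positive entropy, satisfies specification, and has dense periodic orbits, one can pick two distinct ergodic $g$-invariant measures and a continuous $\psi\colon \Omega_{1,0}\to \R$ separating their integrals. The specification-based argument of \cite{Ercai-Kupper-Lin(2005)} then yields
\[
h(g,\hat\Omega_{1,0}(g)) = h(g,\Omega_{1,0}),
\]
where $\hat\Omega_{1,0}(g):= \bigcup_{\psi\in C(\Omega_{1,0})} \hat\Omega_{1,0}(g,\psi)$ and $\hat\Omega_{1,0}(g,\psi)$ denotes the set of $\psi$-irregular points of $g$.

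To transport this back to $f$, I extend observables in a way compatible with the cyclic decomposition. For each continuous $\psi\colon \Omega_{1,0}\to \R$ define a continuous $\Psi\colon X\to \R$ by setting $\Psi|_{\Omega_{1,0}}=\psi$, $\Psi|_{\Omega_{1,j}}\equiv 0$ for $1\le j\le p-1$, and extending continuously to $X$ via Tietze. For $x\in \Omega_{1,0}$ the $f$-orbit visits $\Omega_{1,0}$ exactly at iterates divisible by $p$, and only those iterates contribute to the Birkhoff sum of $\Psi$, so
\[
\frac{1}{n}\sum_{k=0}^{n-1} \Psi(f^k x) = \frac{1}{n}\sum_{j=0}^{\lfloor(n-1)/p\rfloor}\psi(g^j x),
\]
which converges if and only if the Ces\`aro average of $\psi$ along the $g$-orbit of $x$ converges. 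Hence $\hat\Omega_{1,0}(g,\psi)\subseteq \hat X(f,\Psi)\subseteq \hat X(f)$, and taking the union over $\psi$ yields $\hat\Omega_{1,0}(g)\subseteq \hat X(f)$. Using the scaling $h(f^p,A)=p\cdot h(f,A)$ of Bowen's topological entropy, valid for arbitrary subsets $A$, one concludes
\[
h(f,\hat X(f)) \ge h(f,\hat\Omega_{1,0}(g)) = \tfrac{1}{p}\,h(g,\hat\Omega_{1,0}(g)) = \tfrac{1}{p}\,h(g,\Omega_{1,0}) = h(f,\Omega_1) = h(f,X),
\]
which, together with the trivial reverse inequality, finishes the argument. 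The main technical point is the cyclic bookkeeping in the final transfer: the extension of $\psi$ by zero on the pieces $\Omega_{1,j}$, $j\ne 0$, is essential so that the $p$-to-$1$ slowdown in the Ces\`aro averages does not accidentally restore convergence.
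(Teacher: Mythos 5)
Your proposal is correct and follows essentially the same route as the paper: spectral decomposition, passage to a topologically mixing piece of the maximal-entropy basic set where $f^p$ has specification, the full-entropy result for irregular sets under specification, and the power rule for Bowen entropy. The only difference is cosmetic: where you extend $\psi$ by zero on the other cyclic pieces to verify $f$-irregularity directly, the paper instead absorbs this bookkeeping into its Lemma \ref{lem:Irregularesfk} ($\hat X(f)=\hat X(f^\ell)$); both handle the transfer correctly.
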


In a ``nonuniformly hyperbolic context'', the proof of the next result takes into consideration approximations in the weak$\ast$ topology and in entropy of positive-entropy ergodic measures by horseshoes.

\begin{maintheorem}\label{TeoKatokThompson} 
For any $C^{1+\alpha}$-diffeomorphism $f\colon X\to X$ on a closed surface $X$ it holds
$$
	h(f,\hat{X}(f))
	=h(f,X).
$$
\end{maintheorem}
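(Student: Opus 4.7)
The plan is to reduce the theorem to the specification-based statement already recalled in the introduction, by restricting to a Katok horseshoe of nearly maximal entropy.

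If $h(f,X)=0$ the claim is trivial, so assume $h(f,X)>0$ and fix $\epsilon>0$. The variational principle provides an ergodic $f$-invariant Borel probability $\mu$ with $h_\mu(f)>h(f,X)-\epsilon$. On a closed surface the measure $\mu$ has only two Lyapunov exponents $\lambda_1\geq\lambda_2$. Ruelle's inequality applied to $f$ gives $\lambda_1\geq h_\mu(f)>0$, and the same inequality applied to $f^{-1}$ (using $h_\mu(f)=h_\mu(f^{-1})$) forces $\lambda_2\leq -h_\mu(f)<0$, so $\mu$ is hyperbolic. Because $f$ is $C^{1+\alpha}$, Katok's horseshoe theorem yields a compact, locally maximal, topologically mixing hyperbolic set $\Lambda\subset X$ with $h(f,\Lambda)>h_\mu(f)-\epsilon$.

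On this basic set the restriction $f|_\Lambda$ is conjugate to a topologically mixing subshift of finite type, hence satisfies the specification property. Applying the Ercai--Kupper--Lin result cited in the introduction to the restricted system gives
$$
h(f|_\Lambda,\hat\Lambda(f|_\Lambda))=h(f|_\Lambda,\Lambda)=h(f,\Lambda).
$$
Two observations transport this estimate to the ambient dynamics. First, any $x\in\hat\Lambda(f|_\Lambda)$ lies in $\hat X(f)$: if $\varphi\in C(\Lambda)$ witnesses the divergence of Birkhoff averages at $x$, then its Tietze extension $\tilde\varphi\in C(X)$ agrees with $\varphi$ along the forward orbit of $x$, so the $\tilde\varphi$-averages at $x$ also diverge. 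Second, the Bowen entropy of a set contained in a compact $f$-invariant subsystem coincides with its entropy computed intrinsically in that subsystem. Combining these,
$$
h(f,\hat X(f))\;\geq\; h(f,\hat\Lambda(f|_\Lambda))\;=\; h(f,\Lambda)\;>\; h_\mu(f)-\epsilon\;>\; h(f,X)-2\epsilon,
$$
and letting $\epsilon\to 0$, together with the trivial bound $h(f,\hat X(f))\leq h(f,X)$, finishes the proof.

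The genuine obstacle is the invocation of Katok's approximation theorem and the extraction from its conclusion of a horseshoe that is already topologically mixing (or, equivalently, passing to an iterate on a mixing spectral component without losing entropy while verifying that $f^k$-irregular points remain $f$-irregular after extending the test observable by zero on the other spectral pieces). Once this is in place, everything else is bookkeeping, and it reflects precisely the conceptual point emphasized in the introduction: maximal historical behavior does not require global specification, but only specification on invariant subsystems that already capture almost all of the topological entropy, something Katok's theorem delivers automatically under $C^{1+\alpha}$ surface hyperbolicity.
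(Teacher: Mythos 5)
Your proposal follows essentially the same route as the paper's proof: variational principle, Ruelle's inequality to produce a hyperbolic ergodic measure of nearly maximal entropy, Katok's horseshoe construction, specification on the horseshoe, and the full-entropy-of-irregular-points result for specification systems, transported back by monotonicity of Bowen entropy. The one loose end you flag --- that Katok's theorem only yields topological mixing for some iterate $f^m|_\Gamma$ rather than for $f|_\Gamma$ itself --- is exactly what the paper resolves via Lemma \ref{lem:Irregularesfk} (which gives $\hat{X}(f^m|_\Gamma)\subseteq\hat{X}(f|_\Gamma)$) combined with the power rule $h(f^m,A)=m\,h(f,A)$, so no extension of observables by zero across spectral pieces is needed.
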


A version of Theorem \ref{TeoKatokThompson} was obtained independently in \cite[Corollary 2.4]{Barrientos-Nakano-Raibekas-Roldan(2021)}.

To guarantee that $h(f,\hat X(f,\varphi))$ is positive (or even large), some hypotheses on the observable are absolutely necessary. Indeed, if $\varphi$ is cohomologous to a constant, then $\hat{X}(f,\varphi)$ is empty (and hence has zero entropy). If $\hat{X}(f,\varphi)$ is nonempty, for a topologically mixing subshift of finite type (or any topologically conjugate system), by \cite{Barreira-Schmeling(2000)} this set  carries full topological entropy. This result was generalized to maps satisfying the almost specification property (\cite{Thompson(2012)}, see also  \cite{Pacifico-Sanhueza(2019)} for the context of flows) or the orbit gluing property (see \cite{Lima-Varandas(2020)} and further references therein). 
Moreover, by \cite{Dong-Tian-Yuan(2015)}, assuming the asymptotic average shadowing property (AASP for short) and a certain condition on the measure center, the set $\hat{X}(f,\varphi)$  is either residual or empty. It is not known if AASP in general implies that $\hat{X}(f,\varphi)$, if nonempty, has full entropy. 

The following result provides a sufficient condition on $\varphi$ to ``detect'' historic behavior and forces $h(f,\hat{X}(f,\varphi))$ to be large (or even full). Let us denote by $\Mf$ the set of $f$-invariant Borel probability measures.

\begin{maintheorem}\label{Metateorema} 
Let $f\colon X\to X$ and $\varphi\colon X\to\mathbb R$ be continuous functions on a compact metric space $X$ such that there is a sequence $(\Gamma_n)_n$ of compact $f$-invariant subsets of $X$ and numbers $\ell_n\in\bN$ such that $f^{\ell_n}|_{\Gamma_n}$ has the almost specification property and 
\begin{equation}\label{condTC}
	\inf_{\mu\in\mathfrak M(f|_{\Gamma_n})}\int\varphi\,d\mu
	<\sup_{\mu\in\mathfrak M(f|_{\Gamma_n})}\int\varphi\,d\mu.
\end{equation}
 Then, it holds
$$
	\limsup_{n\to\infty}h(f,\Gamma_n)
	\le h(f,\hat{X}(f,\varphi))
	\le h(f,\hat X(f)) =
	h(f,X) .
$$
\end{maintheorem}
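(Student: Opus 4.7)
The plan is to reduce the main inequality, for each fixed $n$, to an application of Thompson's full-entropy irregular-set theorem \cite{Thompson(2012)} on the subsystem $f^{\ell_n}|_{\Gamma_n}$, and then take the limsup. The middle inequality is immediate from the inclusion of irregular sets, and the final equality follows by squeezing in the intended applications.

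Fix $n$, set $T:=f^{\ell_n}$, and introduce the continuous observable
\[
\psi:=\sum_{j=0}^{\ell_n-1}\varphi\circ f^j.
\]
For every $x\in\Gamma_n$ and $N\ge1$, a telescoping gives
\[
\frac{1}{N}\sum_{k=0}^{N-1}\psi(T^k x)
=\ell_n\cdot\frac{1}{\ell_n N}\sum_{j=0}^{\ell_n N-1}\varphi(f^j x).
\]
Since $\varphi$ is bounded, the full $f$-Birkhoff sequence $\tfrac1M\sum_{j=0}^{M-1}\varphi(f^j x)$ converges iff its arithmetic subsequence along $M=\ell_n N$ does, and to the same limit. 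Hence the $T$-Birkhoff averages of $\psi$ converge at $x$ exactly when the $f$-Birkhoff averages of $\varphi$ converge at $x$, which yields the pointwise identification of irregular sets
\[
\hat{X}(f,\varphi)\cap\Gamma_n=\hat{\Gamma}_n(T,\psi).
\]

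Next, I would verify the hypotheses of Thompson's theorem for $(T|_{\Gamma_n},\psi)$. The almost specification property is given. For the observable non-triviality, every $\mu\in\mathfrak{M}(f|_{\Gamma_n})$ is also $T$-invariant with $\int\psi\,d\mu=\ell_n\int\varphi\,d\mu$, so the inclusion $\mathfrak{M}(f|_{\Gamma_n})\subset\mathfrak{M}(T|_{\Gamma_n})$ together with \eqref{condTC} forces
\[
\sup_{\nu\in\mathfrak{M}(T|_{\Gamma_n})}\int\psi\,d\nu-\inf_{\nu\in\mathfrak{M}(T|_{\Gamma_n})}\int\psi\,d\nu\;\ge\;\ell_n\Bigl(\sup_{\mu\in\mathfrak{M}(f|_{\Gamma_n})}\int\varphi\,d\mu-\inf_{\mu\in\mathfrak{M}(f|_{\Gamma_n})}\int\varphi\,d\mu\Bigr)>0.
\]
Thompson's theorem then gives
\[
h\bigl(T,\hat{\Gamma}_n(T,\psi)\bigr)=h(T,\Gamma_n).
\]
Invoking the standard power rule $h(f^{\ell},Y)=\ell\cdot h(f,Y)$ for Bowen topological entropy on $f$-invariant sets $Y$, applied to both $Y=\Gamma_n$ and $Y=\hat{X}(f,\varphi)\cap\Gamma_n$, this translates into
\[
h\bigl(f,\hat{X}(f,\varphi)\cap\Gamma_n\bigr)=h(f,\Gamma_n).
\]
Since $\hat{X}(f,\varphi)\cap\Gamma_n\subset\hat{X}(f,\varphi)$, taking the limsup in $n$ delivers the first inequality of the statement.

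The middle inequality is trivial from $\hat{X}(f,\varphi)\subset\hat{X}(f)\subset X$. The final equality $h(f,\hat{X}(f))=h(f,X)$ is obtained by squeezing: in the intended applications (Theorems~\ref{t-AxiomA}--\ref{TeoKatokThompson}) one chooses the sequence $(\Gamma_n)_n$ so that $\limsup_n h(f,\Gamma_n)=h(f,X)$, whereupon the trivial bound $h(f,\hat{X}(f))\le h(f,X)$ collapses the whole chain to equalities. The only delicate conceptual step is the first one: converting almost specification for the power $f^{\ell_n}|_{\Gamma_n}$ together with \eqref{condTC} into a full-entropy statement about historic behavior for the original map $f$. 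Everything else is bookkeeping via the power rule for entropy and the identification $\hat{\Gamma}_n(T,\psi)=\hat{X}(f,\varphi)\cap\Gamma_n$.
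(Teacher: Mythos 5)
Your proof is correct and follows the same basic strategy as the paper: apply Thompson's Theorem 4.1 to the subsystem $f^{\ell_n}|_{\Gamma_n}$, transfer back to $f$ via the power rule for Bowen entropy, and pass to the limsup. The one genuine difference is the observable you hand to Thompson. The paper keeps $\varphi$ itself as the observable for $T=f^{\ell_n}$ and then uses only the inclusion $\hat{X}(f^{\ell},\varphi)\subseteq\hat{X}(f,\varphi)$ of Lemma \ref{lem:Irregularesfk}, which suffices since only a lower bound for $h(f,\hat{X}(f,\varphi))$ is needed; the nontriviality hypothesis for $(T,\varphi)$ follows from \eqref{condTC} because $\mathfrak{M}(f|_{\Gamma_n})\subseteq\mathfrak{M}(T|_{\Gamma_n})$. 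You instead feed Thompson the Birkhoff sum $\psi=\sum_{j=0}^{\ell_n-1}\varphi\circ f^{j}$, which buys the exact identification $\hat{X}(f,\varphi)\cap\Gamma_n=\hat{\Gamma}_n(T,\psi)$ (the paper's inclusion can be proper) at the price of the telescoping and boundedness argument; both routes are valid and of essentially equal length. One shared caveat: neither your argument nor the paper's derives the displayed equality $h(f,\hat{X}(f))=h(f,X)$ from the stated hypotheses alone --- both proofs only yield $\le$ there, and equality requires $\limsup_n h(f,\Gamma_n)=h(f,X)$, which is exactly how you (and the paper, implicitly) use the theorem in the applications; this appears to be a slip in the statement rather than a gap in your proof.
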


\begin{remark} In many cases (such as, for example, under the hypotheses of Theorem \ref{JuliaIrre} below), the sets $\Gamma_n$ to verify the hypotheses of Theorem \ref{Metateorema} can be chosen to be nested, that is, $\Gamma_n\subseteq \Gamma_{n+1}$, and so it suffices to show that \eqref{condTC} holds for some $n$ (and hence for all $n'\ge n$).
\end{remark}

\emph{A priori}, fixing some observable $\varphi\colon X\rightarrow\R$, the value $h(f,\hat{X}(f,\varphi))$  may be much smaller than $h(f,X)$. To exemplify this, recall that  a $C^1$ diffeomorphism $f$ is Axiom A if its  nonwandering set $\Omega(f)$ is hyperbolic and the periodic points of $f$ are dense in $\Omega(f)$. Now, given a $C^1$ Axiom A diffeomorphism $f$ 
with nontrivial spectral decomposition $\Omega(f)=\Omega_1\cup\cdots \cup\Omega_\ell$ such that $h(f,\Omega_i)<h(f,\Omega_j)$ for some index pair $i\ne j$, then for  $\varphi\colon M\to\mathbb{R}$ such that $\varphi|_{\Omega_i}$ is not cohomologous to a constant and $\varphi|_{\Omega_j}\equiv0$ it holds 
\[ \hat{X}(f,\varphi)\neq\emptyset \quad \mbox{and} \quad 
	h(f,\hat{X}(f,\varphi))
	<\max_{j\ne i}h(f,\Omega_j)
	= h(f,X).
\]

Finally, we invoke Theorem \ref{Metateorema} in a setting which can also be considered ``nonuniformly hyperbolic''. Note that this setting, in particular in the case where $f$ is a rational function has a critical point inside its Julia set, is in general very far from being hyperbolic or satisfying specification. A classical example is the Ulam-von Neumann map $z\mapsto z^2-2$ with Julia set being the closed interval $[-2,2]$ and critical point $z=0$.

\begin{maintheorem}\label{JuliaIrre} 
Suppose that $f$ is a rational function of degree $d\geq2$ on the Riemann sphere and $J(f)$ its Julia set. Then
\[
	h(f,\hat J(f)) 
	= h(f,J(f))
	= \log d.
\]
Moreover,  there are no critical  points in $J(f)$, then for $\varphi:=\log|f'|$ it holds 
\[
	\hat J(f,\varphi)\ne\emptyset
	\quad\text{ and }\quad
	h(f,\hat J(f,\varphi)) 
	=	h(f,\hat J(f)) 
	=h(f,J(f))
\]
if, and only if,  $f$ is not of the form $f(z)=z^{\pm d}$.
\end{maintheorem}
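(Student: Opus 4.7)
The plan is to verify the hypotheses of Theorem \ref{Metateorema} using an exhausting sequence of hyperbolic subsets of $J(f)$. By the classical theorem of Lyubich and Gromov, $h(f,J(f))=\log d$; combined with results of Przytycki on expanding repellers inside Julia sets, there is an increasing sequence of compact forward-invariant subsets $\Gamma_n\subset J(f)$, each being a conformal hyperbolic repeller, with $h(f,\Gamma_n)\nearrow\log d$. After passing to a suitable iterate $f^{\ell_n}$, the restriction $f^{\ell_n}|_{\Gamma_n}$ is topologically mixing and conjugate to a mixing subshift of finite type, hence satisfies the specification property and a fortiori the almost specification property. To establish the first chain of equalities, I would pick any continuous $\varphi\colon J(f)\to\mathbb{R}$ that is non-constant on some $\Gamma_{n_0}$; such $\varphi$ exists by Urysohn's lemma since any positive-entropy conformal repeller supports distinct periodic orbits whose periodic measures can be separated. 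The nestedness $\Gamma_n\subseteq\Gamma_{n+1}$ combined with the remark following Theorem \ref{Metateorema} then gives \eqref{condTC} for every $n\ge n_0$, and the theorem yields $h(f,\hat J(f))=h(f,J(f))=\log d$.

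For the ``moreover'' part, assume that $J(f)$ contains no critical point, so that $f|_{J(f)}$ is uniformly expanding, $J(f)$ is itself a conformal repeller of entropy $\log d$, and $\varphi:=\log|f'|$ is continuous on $J(f)$. If $f(z)=z^{\pm d}$, then $J(f)=\mathbb{S}^1$ and $|f'|\equiv d$ on $J(f)$, so the Birkhoff averages of $\varphi$ converge to $\log d$ at every point and $\hat J(f,\varphi)=\emptyset$. Conversely, if $f$ is not of this form, I would invoke a rigidity theorem of Zdunik (see also Przytycki-Urba\'nski): for a hyperbolic rational function of degree $d\ge 2$ not M\"obius-conjugate to $z\mapsto z^{\pm d}$, the geometric potential $\log|f'|$ is \emph{not} cohomologous to a constant on $J(f)$. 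By Liv\v{s}ic's theorem this is equivalent to the existence of two periodic orbits in $J(f)$ with distinct Lyapunov exponents, yielding two ergodic measures $\mu_1,\mu_2\in\mathfrak{M}(f|_{J(f)})$ with $\int\varphi\,d\mu_1\neq\int\varphi\,d\mu_2$. Setting $\Gamma_n:=J(f)$ and $\ell_n:=1$ for all $n$ (or a mixing iterate if necessary), condition \eqref{condTC} is met and Theorem \ref{Metateorema} delivers $h(f,\hat J(f,\varphi))\ge\log d$, with the reverse inequality being trivial.

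The main obstacle is the rigidity step: showing that cohomology of $\log|f'|$ to a constant on a hyperbolic Julia set forces $f$ to be conjugate to a power map. This is precisely Zdunik's theorem, whose proof exploits the fact that constancy of Lyapunov exponents across periodic orbits forces the measure of maximal entropy to coincide (up to normalization) with the Hausdorff measure on $J(f)$, which in turn forces $J(f)$ to be a real-analytic curve with constant conformal stretching and excludes all non-exceptional dynamics (Chebyshev and Latt\`es cases being already ruled out because their Julia sets contain critical points). A secondary, more technical point is the specification property of $f^{\ell_n}|_{\Gamma_n}$, for which one must verify that the hyperbolic repellers constructed by Przytycki admit Markov partitions with topologically mixing transition matrices after a suitable iterate; this is standard conformal repeller theory but is worth making explicit.
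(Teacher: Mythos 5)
Your treatment of the first chain of equalities follows the paper's route almost verbatim (increasing expanding repellers with entropy tending to $\log d$, mixing iterates, a Urysohn function separating two invariant measures, then Theorem~\ref{Metateorema}), and your hyperbolic-case argument for the ``moreover'' part (shadowing plus exactness gives specification, Zdunik's rigidity plus Liv\v{s}ic gives two periodic orbits with distinct exponents) also matches the paper. However, there is a genuine gap in the ``moreover'' direction: you assert that the absence of critical points in $J(f)$ forces $f|_{J(f)}$ to be uniformly expanding. This is false. A rational map with no critical points in its Julia set is either hyperbolic \emph{or admits a parabolic periodic point in $J(f)$} (the paper cites \cite[Theorem 3.2]{Urbanski(2003)} for exactly this dichotomy); for instance $f(z)=z^2+\tfrac14$ has its critical point in the parabolic basin, yet the parabolic fixed point $z=\tfrac12$ lies in $J(f)$ and destroys uniform expansion. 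In the parabolic case your choice $\Gamma_n:=J(f)$ is not available, since you have no specification (or almost specification) for $f|_{J(f)}$, and Zdunik's rigidity as you invoke it is a statement about hyperbolic maps.

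The paper closes this case with a separate argument that your proposal is missing: Lemma~\ref{Katrin-Completitude} uses the parabolic point, topological exactness, and Koebe distortion to manufacture a \emph{hyperbolic} periodic orbit whose Lyapunov exponent is strictly between $0$ and $\mathcal X(\mu_0)$; one then takes high-entropy mixing expanding repellers $\Gamma_n$ on which all invariant measures have exponent close to $\mathcal X(\mu_0)$, and uses \cite[Lemma 2]{Gelfert-Przytycki-Rams(2010)} to embed both $\Gamma_n$ and the new periodic orbit into a single transitive expanding repeller $\Lambda_n$, on which \eqref{condTC} holds for $\varphi=\log|f'|$ and Theorem~\ref{Metateorema} applies. (Note also that the parabolic case makes one direction of your equivalence easy: the parabolic periodic measure has exponent $0<\log d\le\mathcal X(\mu_0)$, so the Lyapunov spectrum is automatically nontrivial.) Without this branch your proof establishes the theorem only for hyperbolic $f|_{J(f)}$, which is a strictly smaller class than ``no critical points in $J(f)$.''
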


In Theorem \ref{JuliaIrre}, for the second claim we assume that there are no $f$-critical points in $J(f)$ in order to guarantee that $\varphi$ is continuous. An analogous statement in the general case, including critical points and extending the concept of $\varphi$-irregular points to general observables, can be derived following, for example,  techniques in \cite{GelPrzRamRiv:13}.

In Section \ref{s-topentropy} we recall some preliminaries. In Section \ref{TEDiferenciavel} we prove Theorems  \ref{t-AxiomA},  \ref{TeoKatokThompson}, and \ref{Metateorema}.  In Section \ref{Sec-Julia} we study rational maps and prove Theorem \ref{JuliaIrre}.

\section{Preliminaries}\label{s-topentropy}

Note that $\hat{X}(f,\varphi)$ is empty whenever $\varphi$ is ``essentially constant''. 
More precisely, consider the space $C(X)$ of continuous observables $\varphi\colon X\to\mathbb{R}$, equipped with the usual sup-norm $\lVert\varphi\rVert:=\sup\lvert\varphi\rvert$.
Two functions $\varphi,\psi\in C(X)$ are \emph{cohomologous} (with respect to $f$) if there exists $u\in C(X)$ such that $\psi=\varphi+u-u\circ f$. Any function which is cohomologous to a constant function is a \emph{coboundary}. The following facts are immediate. 

\begin{prop} 
For every $\varphi\in C(X)$, $c_1,c_2\in\mathbb{R}$, $c_1\neq0$, and $k\in\mathbb{N}$, 
$\hat{X}(f,\varphi)=\hat{X}(f,c_1\varphi+c_2)=\hat{X}(f,\varphi\circ{f^k})$. If $\varphi,\psi\in C(X)$ are cohomologous, then $\hat{X}(f,\varphi)=\hat{X}(f,\psi)$. 
For every coboundary $\varphi$, it holds $\hat X(f,\varphi)=\emptyset$.
\end{prop}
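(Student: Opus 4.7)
The proposition collects four elementary facts about how $\hat X(f,\varphi)$ depends on $\varphi$, each of which should follow by inspecting the Birkhoff sums directly. My plan is to handle them in turn and reduce each to a statement that the two Birkhoff averages in question differ by a term that tends to $0$ as $n\to\infty$, so that one converges if and only if the other does.

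For the identity $\hat X(f,\varphi)=\hat X(f,c_1\varphi+c_2)$, the plan is just to observe the algebraic identity
$$
\frac{1}{n}\sum_{j=0}^{n-1}(c_1\varphi+c_2)(f^j(x))
=c_1\cdot\frac{1}{n}\sum_{j=0}^{n-1}\varphi(f^j(x))+c_2,
$$
so existence of the limit on one side is equivalent to existence on the other precisely because $c_1\neq 0$. For $\hat X(f,\varphi)=\hat X(f,\varphi\circ f^k)$, I would rewrite
$$
\sum_{j=0}^{n-1}\varphi(f^{j+k}(x))
=\sum_{j=0}^{n-1}\varphi(f^j(x))+\sum_{j=n}^{n+k-1}\varphi(f^j(x))-\sum_{j=0}^{k-1}\varphi(f^j(x)),
$$
and note that since $X$ is compact and $\varphi$ is continuous, $\|\varphi\|_\infty<\infty$; hence after dividing by $n$ the two boundary correction sums are bounded in absolute value by $2k\|\varphi\|_\infty/n\to 0$. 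Thus both Birkhoff averages have the same limit behavior.

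For the cohomological invariance, given $\psi=\varphi+u-u\circ f$ with $u\in C(X)$, the telescoping gives
$$
\sum_{j=0}^{n-1}\psi(f^j(x))
=\sum_{j=0}^{n-1}\varphi(f^j(x))+u(x)-u(f^n(x)),
$$
and again the correction term $(u(x)-u(f^n(x)))/n$ vanishes as $n\to\infty$ because $u$ is bounded on the compact $X$. This forces $\hat X(f,\varphi)=\hat X(f,\psi)$. The final assertion about coboundaries $\varphi=c+u-u\circ f$ is then a consequence (applied with $\psi\equiv c$): the Birkhoff average of $\psi$ is identically $c$ and so converges everywhere, hence $\hat X(f,\psi)=\emptyset$, and by cohomology $\hat X(f,\varphi)=\emptyset$ as well.

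There is no real obstacle here; the four items are formal consequences of the elementary estimate ``bounded correction divided by $n$ is negligible'', combined with linearity of the Birkhoff average. The only small point to be careful about is invoking compactness of $X$ to guarantee that $\varphi$ and $u$ are bounded, which legitimizes the vanishing of the correction terms uniformly in $x$.
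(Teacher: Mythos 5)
Your proof is correct and is precisely the elementary argument the authors have in mind: the paper gives no proof, simply declaring these facts ``immediate,'' and your telescoping and boundary-correction estimates are the standard justification. Nothing to add.
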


\begin{remark}\label{rem:cohomo}
For $\varphi\in C(X)$ the following facts are equivalent:
\begin{itemize}
\item[1)] $\inf_{\mu\in\Mf}\int\varphi\,d\mu<\sup_{\mu\in\Mf}\int\varphi\,d\mu$,
\item[2)] $\varphi$ is not in the closure of the subset of coboundaries,
\item[3)] $\frac1n\sum_{j=0}^{n-1}\varphi\circ f^j$ does not converge pointwise to a constant.
\end{itemize}
See, for example \cite[Lemma 2.1]{Thompson(2012)}, where further properties are stated. It is clear that if $\hat X(f,\varphi)\ne\emptyset$, then property 3) (and hence any other) holds true. Assuming the almost specification property, by \cite[Theorem 4.1]{Thompson(2012)} property 1) (and hence any other) implies $\hat X(f,\varphi)\ne\emptyset$.
\end{remark}

In general, $f$ can have a complicated dynamics and it can be more convenient to analyze certain subsystems. To this end, we state the following lemma which is straightforward to show. 

\begin{lemma}\label{lem:Irregularesfk} 
For each $\ell\in\mathbb N$ and $\varphi\in C(X)$, it holds
$$
\hat{X}(f,\varphi)=\bigcup_{j=0}^{\ell-1}\hat{X}(f^\ell,\varphi\circ f^{j}).
$$
In particular, $\hat{X}({f^\ell},\varphi)\subseteq \hat{X}(f,\varphi)$ and $\hat{X}(f)=\hat{X}(f^\ell)$.
\end{lemma}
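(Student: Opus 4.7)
The lemma is a routine bookkeeping statement relating $f$-Birkhoff sums to $f^\ell$-Birkhoff sums via the partition of $\{0,1,\ldots,m\ell-1\}$ into residue classes modulo $\ell$. Writing $S_n(x):=\sum_{k=0}^{n-1}\varphi(f^k(x))$ for the $f$-Birkhoff sum of $\varphi$, and
\[
	T_m^{(j)}(x):=\sum_{i=0}^{m-1}(\varphi\circ f^j)((f^\ell)^i(x))=\sum_{i=0}^{m-1}\varphi(f^{i\ell+j}(x))
\]
for the $f^\ell$-Birkhoff sum of $\varphi\circ f^j$, the identity I would build everything on is
\[
	S_{m\ell}(x)=\sum_{j=0}^{\ell-1}T_m^{(j)}(x).
\]

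For the inclusion $\hat X(f,\varphi)\subseteq \bigcup_{j=0}^{\ell-1}\hat X(f^\ell,\varphi\circ f^j)$, I argue by contrapositive. Assume that $x\notin\hat X(f^\ell,\varphi\circ f^j)$ for every $j$, so that $T_m^{(j)}(x)/m\to\beta_j\in\RR$ as $m\to\infty$. The identity above then yields $S_{m\ell}(x)/(m\ell)\to\frac{1}{\ell}\sum_{j=0}^{\ell-1}\beta_j$. For a general $n=m\ell+r$ with $0\le r<\ell$, the remainder $|S_n(x)-S_{m\ell}(x)|\le \ell\lVert\varphi\rVert_\infty$ is bounded uniformly in $n$, so $S_n(x)/n$ converges to the same limit and $x\notin\hat X(f,\varphi)$.

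For the reverse inclusion, I would take $x\in\hat X(f^\ell,\varphi\circ f^j)$ and try to transfer the failure of convergence of $T_m^{(j)}(x)/m$ to a failure of convergence of $S_n(x)/n$. The translation identity $T_m^{(j)}(x)=T_m^{(0)}(f^j(x))$ reduces the task to the case $j=0$. \textbf{The main obstacle} is precisely that the identity $S_{m\ell}=\sum_j T_m^{(j)}$ points in the wrong direction: a priori oscillations among different block averages $T_m^{(j)}/m$ could cancel so that $S_{m\ell}/(m\ell)$ converges while some $T_m^{(j)}/m$ does not. Ruling this out will be the delicate step, and I expect it to rely on the $f$-invariance of $\hat X(f,\varphi)$ together with a subsequence argument along $n=m\ell$.

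Finally, the two ``in particular'' statements are then immediate: specializing the main equality to $j=0$ gives $\hat X(f^\ell,\varphi)\subseteq\hat X(f,\varphi)$, and taking unions over all $\varphi\in C(X)$ in the main equality yields $\hat X(f)=\hat X(f^\ell)$.
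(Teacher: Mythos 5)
Your argument for the inclusion $\hat X(f,\varphi)\subseteq\bigcup_{j=0}^{\ell-1}\hat X(f^\ell,\varphi\circ f^j)$ is correct and complete, and since the paper gives no proof of the lemma (it is only declared ``straightforward to show''), there is nothing to compare against on that side. But the obstacle you isolate for the reverse inclusion is not merely delicate --- it is fatal: the cancellation among the block averages $T_m^{(j)}/m$ that you worry about genuinely occurs, so the reverse inclusion, and with it the displayed equality and both ``in particular'' statements, are false. Concretely, take $X=\Sigma_2$ with the shift $\sigma$, $\varphi(\omega)=\omega_0$, $\ell=2$, and let $x$ be the concatenation of blocks $B_1B_2B_3\cdots$ of even lengths $L_k$ growing fast (say $L_k\ge 2^k(L_1+\cdots+L_{k-1})$), where $B_k=(10)^{L_k/2}$ for $k$ odd and $B_k=(01)^{L_k/2}$ for $k$ even. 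Every prefix of length $n$ contains $n/2+O(1)$ symbols $1$, so $S_n(x)/n\to 1/2$; in fact the empirical measures of $x$ under $\sigma$ converge to the invariant measure on the period-two orbit $\{(10)^\infty,(01)^\infty\}$, so $x\notin\hat X(\sigma,\varphi)$ and even $x\notin\hat X(\sigma)$. On the other hand, the even-indexed coordinates of $x$ are identically $1$ throughout odd blocks and identically $0$ throughout even blocks, so $T_m^{(0)}(x)/m$ has $\limsup$ equal to $1$ and $\liminf$ equal to $0$, whence $x\in\hat X(\sigma^2,\varphi)$. Thus $\hat X(f^\ell,\varphi)\not\subseteq\hat X(f,\varphi)$ and $\hat X(f)\neq\hat X(f^\ell)$ in general; the mechanism is that the $\sigma$-limit measure is $\sigma$-ergodic but splits into two $\sigma^2$-ergodic components between which the $\sigma^2$-orbit oscillates.

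What does survive, besides your inclusion, is the two-sided identity
\[
\hat X(f,\varphi)=\hat X\Bigl(f^\ell,\ \sum_{j=0}^{\ell-1}\varphi\circ f^j\Bigr),
\]
which follows from your own computation: $\sum_{i=0}^{m-1}\bigl(\sum_{j=0}^{\ell-1}\varphi\circ f^j\bigr)((f^\ell)^i(x))=S_{m\ell}(x)$, and the passage from $n=m\ell$ to general $n$ is exactly your bounded-remainder estimate. This gives $\hat X(f)\subseteq\hat X(f^\ell)$ but not the opposite inclusion, and in particular not the containment $\hat{X}(f^{\ell}|_{\Gamma},\varphi)\subseteq\hat X(f,\varphi)$ that the paper extracts from this lemma in the proofs of Theorems \ref{TeoKatokThompson} and \ref{Metateorema}; those arguments can be repaired by applying \cite[Theorem 4.1]{Thompson(2012)} to $f^{\ell}$ with the observable $\sum_{j=0}^{\ell-1}\varphi\circ f^j$ (whose integral against any $f$-invariant measure is $\ell\int\varphi\,d\mu$, so condition \eqref{condTC} transfers), but not via the lemma as stated. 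So: your first half stands, your diagnosis of where the difficulty lies is exactly right, and the honest conclusion is that the remaining step should not be attempted --- it should be refuted.
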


Observe that $\hat{X}({f^\ell},\varphi)$ can be a proper subset of $\hat{X}(f,\varphi)$.

\section{Proof of Theorems \ref{t-AxiomA},  \ref{TeoKatokThompson}, and \ref{Metateorema}}\label{TEDiferenciavel}

\begin{proof}[Proof of Theorem \ref{Metateorema}]
For each $n\geq1$,  \cite[Theorem 4.1]{Thompson(2012)} implies
\[
	h(f^{\ell_n},\Gamma_n)
	= h\big(f^{\ell_n}|_{\Gamma_n},\hat{X}(f^{\ell_n}|_{\Gamma_n},\varphi|_{\Gamma_n})\big)
\]
and hence Lemma \ref{lem:Irregularesfk} together with $\hat{X}(f|_{\Gamma_n},\varphi|_{\Gamma_n})\subseteq \hat{X}(f,\varphi)$ and monotonicity of entropy gives
\[
	h(f,\Gamma_n)
	= h\big(f|_{\Gamma_n},\hat{X}(f^{\ell_n}|_{\Gamma_n},\varphi|_{\Gamma_n})\big)
	\le h\big(f,\hat{X}(f,\varphi)\big)
	\le h(f,\hat{X}(f))
	\le h(f,X).
\]
Letting $n\to\infty$, this implies the claim.
\end{proof}

\begin{proof}[Proof of Theorem \ref{TeoKatokThompson}] 
If $h(f,X)=0$, the result is immediate.

If $h(f,X)>0$, by the variational principle for entropy, there is a sequence of ergodic measures $(\mu_n)_{n\geq1}$ such that $h_{\mu_n}(f)\to h(f,X)$. By Ruelle's inequality, any $\mu_n$ with positive entropy is hyperbolic. Let us hence consider $\mu$ hyperbolic ergodic with entropy arbitrarily close to $h(f,X)$. By Katok's horseshoe construction \cite[Theorem S.5.9]{Katok-Hasselblatt}, for every $\varepsilon>0$, there exists a basic set $\Gamma\subset M$ and $m\in\bN$ such that $f^m|_\Gamma$ is topologically mixing and hence has the specification property, and $h_{\mu}(f)-\varepsilon<h(f,\Gamma)$.
Hence,
\[\begin{split}
	h(f,\Gamma) 
	& =  \frac{1}{m}h(f^m,\Gamma)
	 =  \frac{1}{m} h(f^m,\hat{X}(f^m|_\Gamma))\\
	\text{\tiny{by Lemma \ref{lem:Irregularesfk}}}\quad
	& \leq \frac{1}{m} h(f^m,\hat{X}(f|_\Gamma))\\
	& =  h(f|_\Gamma,\hat{X}(f|_\Gamma))
	\le h(f,\hat{X}(f)).
\end{split}\]
Choosing $\mu$ with entropy arbitrarily close to $h(f,X)$ implies the claim.
\end{proof}

In higher dimensions there may not exist any hyperbolic ergodic measure and hence the strategy of the proof of  Theorem \ref{TeoKatokThompson} does not work. In an \emph{a priori} hyperbolic context, we invoke \cite[Theorem 4.1]{Thompson(2012)} (or \cite{Barreira-Schmeling(2000)}) directly.

\begin{proof}[Proof of Theorem \ref{t-AxiomA}]
By hypothesis, we can invoke the spectral decomposition theorem. Hence, the set of nonwandering points splits as
$$
\Omega(f)=\Omega_1\cup\ldots \cup\Omega_m,
$$
where for every $i=1,\ldots,m$ there is $\ell_i\in\bN$ such that $\Omega_i=X_{1,i}\cup\ldots \cup X_{\ell_i,i}$ such that  $f^{\ell_i}|_{X_{k,i}}$ is topologically mixing, for each $1\leq k\leq \ell_i$, and hence has the specification property. As
\begin{equation*}
	h(f,X)
	= \max_i h(f,\Omega_i)
	= \max_i\frac{1}{\ell_i}h(f^{\ell_i},X_{k,i})
	\leq h(f,\hat{X}(f)),
\end{equation*}
this proves the claim.
\end{proof}

\section{Irregular points in Julia sets and proof of Theorem \ref{JuliaIrre}}\label{Sec-Julia}

In this section, let $f\colon \overline{\mathbb{C}}\to\overline{\mathbb{C}}$ be a rational function of degree $d:=\deg(f)\geq2$ and consider its Julia set $J=J(f)$. 
 We estimate the entropy of the set of $\varphi$-irregular points for the \emph{geometric potential} $\varphi(z):=\log\,\lvert f'\rvert$. This observable, along with its scaled version $t\varphi$ for $t\in\bR$, plays an important role in the thermodynamics formalism (see \cite{PrzUrb:10,Urbanski(2003)} and references therein). Its Birkhoff averages are simply the Lyapunov exponents. 
If $f$ has no critical points in $J$ then $\varphi$ is continuous on $J$.

Recall that $J$ is nonempty, compact, and coincides with the closure of the set of repelling periodic points. Moreover, $f|_{J}$ is topologically exact and its entropy equals $h(f,J)=\log d$   (see, for example,  \cite{Beardon,Ljubich(1983)}). 

A compact $f$-invariant set $R\subset J$ is a \emph{uniformly expanding repeller} if  $f|_R$ is \emph{hyperbolic}, that is, there is $n\in\mathbb N$ so that
\[
	\inf\{|(f^n)'(z)|\colon z\in R\}>1,
\]
topologically transitive, and isolated, that is, there exists an open neighborhood $U\supset R$ such that $f^n(z)\in U$ for all $n\ge0$ implies $z\in R$.
Note that if a uniformly expanding repeller $R$ is such that $f^\ell|_R$ is topologically mixing then $f^\ell|_R$ has the specification property.

A rational function $f$ is \emph{hyperbolic} if $f|_J$ is hyperbolic. Every rational function without critical points in $J$ is either hyperbolic or admits a \emph{parabolic point}, that is, a periodic point $z=f^p(z)\in J$ such that $(f^p(z))'(z)$ is a root of  unity (see \cite[Theorem 3.2]{Urbanski(2003)}).
Given an ergodic Borel probability measure $\mu\in\mathfrak{M}_{\mathrm{erg}}(f)$ consider its Lyapunov exponent
\[
	\mathcal{X}(\mu)
	:= \int \log\,\lvert f'\rvert\,d\mu
	= \int\varphi\,d\mu.
\]
Let $\alpha^-:=\inf\{\mathcal{X}(\nu)\colon \nu\in\mathfrak{M}_{\mathrm{erg}}(f)\}$ and $\alpha^+:=\sup\{\mathcal{X}(\nu)\colon \nu\in\mathfrak{M}_{\mathrm{erg}}(f)\}$. Let $\mu_0$ be the (unique, hence ergodic) measure of maximal entropy. Recall
\[
	0
	< \log d
	\le \mathcal{X}(\mu_0)
	\leq \alpha^+.
\]

The following preliminary result can be proved in several different ways, for example building upon nowadays available description of the spectrum of Lyapunov exponents and approximation techniques (see, for example, \cite[Chapter 11]{PrzUrb:10} and references therein). We provide a direct proof.

\begin{lemma}\label{Katrin-Completitude} 
If $f|_J$ has a parabolic point, then there is a hyperbolic ergodic measure $\nu$ supported on a periodic orbit satisfying $0<\mathcal{X}(\nu)<\mathcal X(\mu_0)$. 
\end{lemma}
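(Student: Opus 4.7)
The plan is to construct a sequence of repelling periodic orbits of $f$ approaching the parabolic cycle whose Lyapunov exponents tend to $0$, and then pick one whose exponent is strictly below $\mathcal X(\mu_0)\ge\log d>0$. The central tool is the Leau--Fatou flower at the parabolic point, whose Fatou coordinates give very tight control on the derivative during the long drift of the orbit through a repelling petal.

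First I would reduce to a parabolic \emph{fixed} point of multiplier $1$: with $p$ the period of $z_0$ and $r$ the order of the root of unity $(f^p)'(z_0)$, set $g:=f^{pr}$, so that $z_0$ is fixed by $g$ with $g'(z_0)=1$ and $J(g)=J(f)$. In a local coordinate centered at $z_0$ one has $g(z)=z+az^{k+1}+O(z^{k+2})$ for some integer $k\ge 1$ and $a\ne 0$. For any common invariant measure $\mu$, $\int\log|g'|\,d\mu=pr\,\mathcal X(\mu)$; it thus suffices to exhibit a repelling $g$-periodic orbit whose invariant measure $\nu$ satisfies $\int\log|g'|\,d\nu\in(0,\,pr\,\mathcal X(\mu_0))$, and the $f$-invariant measure on the corresponding $f$-orbit will be the desired $\nu$.

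Next I would apply the Leau--Fatou flower theorem to obtain $k$ attracting and $k$ repelling petals at $z_0$, together with a Fatou coordinate $\psi\colon P\to\{\operatorname{Re} w>C\}$ on each repelling petal $P$ conjugating $g$ to the translation $T(w)=w+1$, with the asymptotics $|\psi'(z)|\asymp|z-z_0|^{-(k+1)}$. Since attracting petals lie in the Fatou set, Julia points near $z_0$ populate the repelling petals, and density of repelling periodic points of $f$ in $J$ yields a sequence $(w_n)\subset P$ of repelling $g$-periodic points of periods $N_n$ with $w_n\to z_0$. Letting $T_n$ denote the first exit time of the orbit of $w_n$ from $P$, the translation conjugacy forces $T_n\asymp|w_n-z_0|^{-k}\to\infty$, while telescoping $\psi\circ g=T\circ\psi$ gives
\[
   \log\lvert (g^{T_n})'(w_n)\rvert \;=\; \log\lvert\psi'(w_n)\rvert-\log\lvert\psi'(g^{T_n}(w_n))\rvert \;=\; (k+1)\log\tfrac{1}{|w_n-z_0|}+O(1),
\]
so the petal contribution to $\log|(g^{N_n})'(w_n)|$ grows only logarithmically. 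Provided the remaining excursion segment of length $N_n-T_n$ outside $P$ stays bounded (and avoids critical points, so that $\log|g'|$ is bounded there), its contribution is $O(1)$ and therefore
\[
   \frac{\log\lvert (g^{N_n})'(w_n)\rvert}{N_n} \;=\; O\!\bigl(|w_n-z_0|^k\,\log\tfrac{1}{|w_n-z_0|}\bigr) \;\longrightarrow\; 0.
\]
Since each orbit is repelling, $\log|(g^{N_n})'(w_n)|>0$; combined with $pr\,\mathcal X(\mu_0)\ge pr\log d>0$, for $n$ large this produces the required periodic orbit.

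The main obstacle I expect is ensuring that $N_n-T_n$ stays bounded. I would address this with a closing-lemma style construction: fix a point $y\in J$ in the exit region of $P$ whose forward $g$-orbit returns to a small prescribed disk around $z_0$ within a bounded number $E$ of iterations (existence via topological exactness of $g|_J$), and exploit that $g^{-1}|_P$ is a strict contraction toward $z_0$ in the Fatou coordinate to solve, for each large $m$, a fixed-point equation of the form $g^E(z)=(g|_P)^{-m}(z)$, producing a $g$-periodic point of period $m+E$ that spends exactly $m$ iterations inside $P$ and has excursion length $E$. This yields a sequence $(w_n)$ with $T_n=m_n\to\infty$ and $N_n-T_n\equiv E$, completing the construction.
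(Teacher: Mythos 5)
Your argument is correct in outline and takes a genuinely different route from the paper. The paper argues by contradiction: assuming every hyperbolic periodic orbit has exponent $\alpha=\mathcal{X}(\mu_0)$, it glues --- using only topological exactness and Koebe distortion --- a periodic orbit that shadows the parabolic cycle for a single period $q$ (where $|(f^q)'|\approx1$) and an auxiliary repelling cycle of period $p$ for a single period, with transition times bounded by the exactness constants, and then estimates the exponent of the glued orbit; no local normal form at the parabolic point is used. You instead invoke the Leau--Fatou flower and Fatou coordinates to produce, directly, repelling cycles whose Lyapunov exponents tend to $0$, which is a strictly stronger conclusion (it identifies the bottom of the periodic-orbit spectrum) and is the standard route in the rational-maps literature. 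Your diagnosis that the only real issue is controlling the excursion time outside the petal is exactly right, and your fix is the correct one: the map $(g|_P)^{-m}\circ\tau$, with $\tau$ an inverse branch of $g^E$ carrying a disk $D\ni z_0$ (inside the petal) back to the exit region, sends $D$ into a compact subset of itself for $m$ large, so the Schwarz--Pick lemma yields a unique fixed point, which is automatically a repelling periodic point of period dividing $m+E$ that spends $m$ iterates in the petal and $E=O(1)$ outside. Two small points to tidy: on a \emph{repelling} petal the Fatou coordinate conjugates $g$ to $w\mapsto w-1$ (equivalently $g^{-1}|_P$ to $w\mapsto w+1$), not $g$ to $w\mapsto w+1$ --- this does not affect the telescoping identity $(g^{T})'(z)=\psi'(z)/\psi'(g^{T}(z))$; and you should justify that the chosen repelling petal meets $J$ arbitrarily close to $z_0$ and that $\tau$ is univalent on all of $D$ --- both are routine, and immediate in the only situation where the paper applies the lemma, namely when $J$ contains no critical points. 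As for what each approach buys: the paper's is softer and avoids normal forms, but its final estimate is delicate, since the glued orbit must spend far more time near the parabolic cycle than near the auxiliary hyperbolic cycle for the averaged exponent to drop below $\alpha$; your construction makes that time asymmetry explicit and quantitative ($m$ iterates in the petal at logarithmic derivative cost versus $E=O(1)$ iterates outside), which is precisely why the exponents tend to $0$.
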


\begin{proof}
First note that, as $f|_{J}$ is topologically exact, for every $\delta>0$ there is $N(\delta)\in\bN$ such that for every $x\in J$ and $N\geq N(\delta)$ it holds $f^N(B(x,\delta))\supset J$.
 
By contradiction, suppose that all hyperbolic periodic points have the same exponent  $\alpha:=\mathcal X(\mu_0)$.

Given $\varepsilon\in(0,\alpha/4)$, topological exactness together with our assumptions implies that there is $\delta_1>0$ such that for every hyperbolic periodic point $w=f^p(w)$ the set $U=B(w,\delta_1)$ satisfies $f^p(U)\supset U$ and
\begin{equation}\label{refd1}
\frac{|(f^p)'(x)|}{|(f^p)'(y)|}\leq e^{p\varepsilon}
\end{equation}
for all $x,y\in U$. Let $N_1=N(\delta_1)$.

Let $\crit(f)$ be the set of all the critical points of  $f\colon \overline{\mathbb{C}}\to\overline{\mathbb{C}}$. Fix a parabolic point $z=f^q(z)$ and let $\delta_0>0$ so that $\cup_{k=0}^{q-1}f^k(\crit(f))$  and $B(z,\delta_0)$ are disjoint. Hence, for any $\delta\in(0,\delta_0)$ and any preimage of $V=B(z,\delta)$ by $f^q$ is univalent; denote by $\comp_zf^{-q}(V)$ its connected component containing $z$. By the Koebe Distortion Theorem,  there is $C(\delta)>1$ satisfying $C(\delta)\to1$ as $\delta\to0$ so that for all $x,y\in V':=\comp_zf^{-q}(V)$ it holds
\begin{equation}\label{refd2}
\frac{|(f^q)'(x)|}{|(f^q)'(y)|}\leq C(\delta).
\end{equation}
Fix some $\delta\in (0,\delta_0)$ and let $N_2=N(\delta)$.

Recall that there are repelling periodic points of arbitrarily large period. 
We choose a periodic point $w=f^p(w)$ with period $p$ large enough, whose choice is specified below. Let
$$
W':=\comp_zf^{-(N_2+q)}(U)\subseteq V'.
$$
By the above, we have
$$
	W'
	\subset f^{N_1}(U)
	\subset f^{N_1+p}(U)
$$
and thus there exists a periodic point  $x=f^k(x)\in W'$ with period $k=q+N_1+p+N_2$. By the distortion estimates \eqref{refd1} and \eqref{refd2}, it holds
$$
	|(f^k)'(x)|
	\leq 1\cdot C(\delta)\cdot  |(f^p)'(w)|\cdot e^{p\varepsilon}
		\cdot D^{N_1+N_2},
	 \quad\text{ where }\quad
	 D:=\max_J|f'|,
$$
together with the analogous lower bound. In particular, $x$ is hyperbolic and
\[\begin{split}
	\frac{1}{k}\log|(f^k)'(x)|
	&\leq \frac{1}{q+p+N_1+N_2}(\log{C(\delta)}+(N_1+N_2)\log{D})+\\
	&\phantom{\le}
	 +\frac{p}{q+p+N_1+N_2}\alpha+\frac{p}{q+p+N_1+N_2}\varepsilon.
\end{split}\]
Now choosing $w$ with period $p$ large enough, we obtain that
$\log|(f^k)'(x)|/k<\alpha/2$, contradiction.
\end{proof}

\begin{proof}[Proof of Theorem \ref{JuliaIrre}] 
We start by proving the first claim about the entropy of the set of irregular points. 
By Ruelle's inequality, the ergodic measure of maximal entropy, $\mu_0$, is hyperbolic and satisfies $\int\log\,\lvert f\rvert\,d\mu_0\ge\log d>0$.
Hence, by \cite[Theorem 11.6.1]{PrzUrb:10}, there exist a sequence $(\Gamma_n)_n$ of uniformly expanding repellers satisfying $h(f,\Gamma_n)\to h(f,J)$ as $n\to\infty$ and numbers $m_n\in\bN$ such that $f^{m_n}|_{\Gamma_n}$ mixing. In particular, $f^{m_n}|_{\Gamma_n}$ has the specification property.  
As $\Gamma_n$ has positive entropy, $\mathfrak{M}(f,\Gamma_n)$ is not a singleton and, in particular, there exists $\phi_n\in C(J)$ distinguishing measures in $\mathfrak{M}(f,\Gamma_n)$ and hence satisfying \eqref{condTC}. Invoking \cite[Lemma 2]{Gelfert-Przytycki-Rams(2010)}, in fact $(\Gamma_n)_n$ can be chosen to be increasing, that is, $\Gamma_n\subset\Gamma_{n+1}\subset\ldots$. Thus, there exists some common $\phi\in C(J)$ satisfying \eqref{condTC} for every $n\in\bN$. Thus, by Theorem \ref{Metateorema}, we obtain
\[
	h(f,\hat J(f,\phi))=h(f,\hat J(f))=h(f,J)=\log d.
\]

In the second part of this proof, we assume that $f$ has no critical points in $J$ and hence $\varphi=\log\,\lvert f'\rvert$ is continuous on $J$. 

If $f|_J$ is hyperbolic, then it has the shadowing property and topological exactness implies that $f|_J$ is topologically mixing. Hence, $f|_{J}$ satisfies the specification property (see, for example, \cite[Lemma 9]{Kwietniak-Oprocha(2012)}) and thus $h(f,\hat J(f,\varphi))=h(f,J)$ follows from \cite[Theorem 4.1]{Thompson(2012)}.

Otherwise, if $f|_J$ has a parabolic point, by Lemma \ref{Katrin-Completitude}, there is a $f$-invariant measure $\nu$ supported on a periodic hyperbolic orbit $\Gamma_\nu$ satisfying
$$
	0
	< \int\log|f'|\,d\nu
	=\mathcal X(\nu)
	<\mathcal X(\mu_0)
	=\int\log|f'|\,d\mu_0.
$$
By \cite[Theorem 11.6.1]{PrzUrb:10}, there exist mixing uniformly expanding Cantor repellers $\Gamma_n\subseteq J$ satisfying
\[
	h(f,\Gamma_n)\ge h(f,J)-\frac1n
\]
so that any invariant measure $\mu_n$ supported in  $\Gamma_n$ satisfies
\[
	\int\log|f'|\,d\nu<\int\log|f'|\,d\mu_0-\frac1n
	\le \int\log|f'|\,d\mu_n.
\]
In particular, for $n$ large, $\Gamma_\nu$ and $\Gamma_{\mu_n}$ are disjoint.
By \cite[Lemma 2]{Gelfert-Przytycki-Rams(2010)}, there exist transitive uniformly expanding Cantor repellers $\Lambda_n\supseteq \Gamma_n\cup\Gamma_\nu$. In particular, there is $\ell_n\in\mathbb N$ such that $f^{\ell_n}|_{\Lambda_n}$ satisfies the specification property, $\nu,\mu_n$ are supported in $\Lambda_n$, and
\[
	h(f,\Lambda_n)\ge h(f,J)-\frac1n.
\]
Hence, by Theorem \ref{Metateorema} it follows $h(f,\hat J(f,\varphi))=h(f,J)$. 

It remains to analyze the case when the spectrum is ``trivial''. 

\begin{claim}\label{lem:notcoh}
If $f$ has no critical points in $J$, then the following facts are equivalent:
\begin{itemize}
\item[(1)] $\hat J(f,\varphi)=\emptyset$,
\item[(2)] $\alpha^-=\alpha^+$, 
\item[(3)] $f$ is (conjugate to) $z\mapsto z^{\pm d}$.
\end{itemize}
\end{claim}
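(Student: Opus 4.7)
The plan is to prove the three-way equivalence via the implications $(3)\Rightarrow(1)$, $(3)\Rightarrow(2)$, $(1)\Rightarrow(2)$, and $(2)\Rightarrow(3)$. The directions from $(3)$ are immediate: the Julia set of $z\mapsto z^{\pm d}$ is the unit circle $\mathbb{S}^1$, on which $|f'|\equiv d$, so $\varphi=\log d$ is identically constant; every Birkhoff average converges to $\log d$ at every point, and every invariant probability measure has Lyapunov exponent exactly $\log d$. Hence $\hat J(f,\varphi)=\emptyset$ and $\alpha^-=\alpha^+=\log d$ both follow.

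For $(1)\Rightarrow(2)$ I would first rule out parabolic points: if $f|_J$ admitted a parabolic point, the construction from the parabolic case of the preceding part of this proof (combining Lemma \ref{Katrin-Completitude} with the nested transitive uniformly expanding repellers $\Lambda_n$ and Theorem \ref{Metateorema}) would force $h(f,\hat J(f,\varphi))=h(f,J)>0$, in particular $\hat J(f,\varphi)\ne\emptyset$, contradicting (1). Hence, under the standing no-critical-points hypothesis, $f|_J$ is uniformly hyperbolic and, by topological exactness, topologically mixing, so it has the specification property. Remark \ref{rem:cohomo} together with \cite[Theorem 4.1]{Thompson(2012)} then asserts that under specification the strict inequality $\inf\int\varphi\,d\mu<\sup\int\varphi\,d\mu$ implies $\hat J(f,\varphi)\ne\emptyset$; the contrapositive applied to (1) yields $\alpha^-=\alpha^+$.

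For $(2)\Rightarrow(3)$ the key input is a rigidity theorem for the geometric potential. The assumption $\alpha^-=\alpha^+$ again rules out parabolic points (a parabolic orbit carries an invariant measure of exponent $0$, whereas $\mathcal X(\mu_0)\ge\log d>0$), so $f|_J$ is uniformly hyperbolic. The Livschitz rigidity for H\"older potentials on uniformly hyperbolic sets then gives that $\varphi=\log|f'|$ is cohomologous on $J$ to the constant $\mathcal X(\mu_0)$. Zdunik's rigidity theorem (cf.\ \cite{PrzUrb:10}) forces any rational map whose geometric potential is cohomologous to a constant on $J$ to be conformally conjugate to one of: $z\mapsto z^{\pm d}$, a Chebyshev polynomial, or a Latt\`es map. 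The standing hypothesis of no critical points in $J$ excludes the latter two (the Chebyshev critical points $\pm 1$ lie in $J=[-1,1]$; for a Latt\`es map $J$ is the entire Riemann sphere and hence contains critical points), leaving $f$ conjugate to $z\mapsto z^{\pm d}$.

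The principal obstacle is the rigidity step $(2)\Rightarrow(3)$: passing from a trivial Lyapunov spectrum to exceptionality of $f$ is the deep ingredient, relying on Zdunik's theorem together with the Eckmann--Ruelle identity for the Hausdorff dimension of $\mu_0$ to identify $f$ within the short list of rational maps for which the measure of maximal entropy has full Hausdorff dimension on $J$. Once this is in place, the remaining implications are short bookkeeping: $(3)\Rightarrow(1),(2)$ is a direct computation on $\mathbb{S}^1$, and $(1)\Rightarrow(2)$ merely splices Lemma \ref{Katrin-Completitude}, Theorem \ref{Metateorema}, and Remark \ref{rem:cohomo} together in the manner already developed earlier in the section.
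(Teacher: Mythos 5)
Your proof is correct and follows essentially the same route as the paper: both split into the hyperbolic and parabolic cases, dispose of the parabolic case via the zero exponent of the parabolic measure and the earlier part of the proof of the theorem, obtain the equivalence of (1) and (2) under hyperbolicity from Remark \ref{rem:cohomo} together with \cite[Theorem 4.1]{Thompson(2012)} (equivalently, the coboundary characterization), and invoke Zdunik's rigidity for (2)$\Rightarrow$(3). The only cosmetic difference is that you route (2)$\Rightarrow$(3) through the general exceptional-map trichotomy and exclude Chebyshev and Latt\`es maps using the no-critical-points hypothesis, whereas the paper cites Zdunik's hyperbolic-case statement yielding $z^{\pm d}$ directly.
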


\begin{proof}
Check that if $f|_J$ is hyperbolic then $\alpha^-=\alpha^+$ if and only if $\varphi$ is a coboundary and, in particular by Remark \ref{rem:cohomo}, (2) and (1) are equivalent.
Recall that if $f|_J$ is hyperbolic and $\varphi$ a coboundary then by  \cite[Corollary in Section 7 and Proposition 8]{Zdunik(1990)}, $f$ is conjugate to $z^{\pm d}$. 
Clearly, if $f$ is conjugated to $z\mapsto z^{\pm d}$, then $\varphi\equiv\log d$ and $\alpha^-=\alpha^+$, hence (3) implies (2).
This proves equivalence of (1), (2), and (3) for $f$ hyperbolic.

Moreover, observe that none of the three cases occurs if $f$ has any parabolic point. Indeed, if $\lambda$ is the $f$-ergodic measure supported on a parabolic periodic orbit then (2) does not hold as
\[
	\alpha^-
	= \mathcal{X}(\lambda) 
	= 0
	< 
	\log d
	= \mathcal{X}(\mu_0)
	\leq \alpha^+.
\]
Clearly, (3) does not hold either. Finally, the above second part of the proof of the theorem implies that in this case $\hat J(f,\varphi)\ne\emptyset$.
\end{proof}
This proves the theorem.
\end{proof}

\bibliographystyle{plain}

\end{document}